\newtheorem{definition}{Definition}[section]
\newtheorem{theorem}{Theorem}[section]
\newtheorem{corollary}{Corollary}[section]
\newcommand{\nn}{\mathbb{N}}
\newcommand{\cc}{\mathbb{C}}
\newcommand{\di}{\displaystyle}
\newcommand{\st}{\subset}
\begin{document}
\title{\bf Subclasses of universal Taylor series and center independence}

\author{V.Vlachou}         
\date{}
\maketitle 
\begin{abstract} 
A holomorphic function on a simply connected domain $\Omega$ belongs to a subclass of universal Taylor series if prescribed and  infinite number of  partial sums of the Taylor expansion of $f$ around a given center $\zeta_0$ realize Mergalyan-type approximations outside $\Omega$. We will prove that this class is independent of the choice of center if the indeces of the partial sums do not grow very fast to infinity.\footnote{ 2010 Mathematics
subject classification: 30K05 (47A16).\\
\textbf{Keywords: } universal Taylor series, multiple universality, disjoint universality.}
\end{abstract}
\noindent
\section{Introduction }
In the last 30 years, several authors have   worked on the notion of universality and  important advances in the research of this topic
have been made. Roughly speaking, we say that an object is universal, if a family (usually denumerable) of objects produced by the original (by a pattern), diverge in a maximal way or in other words realize maximal approximations.

We are interested in Universal Taylor Series, a notion first introduced by V. Nestoridis (see \cite{N2}). For this reason, let us fix a simply connected domain $\Omega\st\cc$.  We will start by giving some basic notation.   We denote by $H(\Omega)$  the space
of functions, holomorphic in $\Omega$, endowed with the topology of uniform convergence on compacta. 
Moreover, for $n\in\nn$, $\zeta_0\in\Omega$ and $f\in H(\Omega)$  we denote by 
$S_{n}(f,\zeta_0)$ the $n^{th}$ partial sum of the Taylor expansion of $f$ around $\zeta_0$.\\
Additionally,  we also use the notations 
\begin{itemize}
\item $\mathcal{A}(K)=\{g\in C(K): \ g \text{ is
holomorphic in } K^o\}, \  K\subset\cc \text{ compact }$\\ 
\item $\di||g||_K=\sup_{z\in K} |g(z)|, \ g\in \mathcal{A}(K)$\\
 \item $\mathcal{M}=\{K\st
\cc: K \ \text{is a compact set with } \ K^c \text{ connected
set}\}$\\
\item $\mathcal{M}_{\Omega^c}=\{K\in\mathcal{M}: K\cap \Omega=\emptyset\}$.
\end{itemize}
We are now ready to give the definition of Universal Taylor Series as given in \cite{N3} (see also \cite{N2}):
\begin{definition}\label{Ne} A holomorhic function $f:\Omega\to\cc$ belongs to the class $U(\Omega, \zeta_0)$, for a fixed point $\zeta_0\in\Omega$, if the set
$\{S_{n}(f,\zeta_0): \ n\in\nn\}$ is dense in $\mathcal{A}(K)$ for every $K\in\mathcal{M}_{\Omega^c}$.
\end{definition} 
In \cite{N3} V. Nestoridis proved that the class $U(\Omega, \zeta_0)$ is a $G_{\delta}$ and dense subset of $H(\Omega)$ for every $\zeta_0\in\Omega$. Moreover,  the class of Universal Taylor Series $U(\Omega,\zeta_0)$   with respect to a fixed center $\zeta_0\in\Omega$   is independent of the choice of $\zeta_0$. This is a deep and elegant result
that first appeared in \cite{GLM} for $\Omega$ bounded and in \cite{vmy1} for the general case (see also \cite{N1}). The motivation  of 
this article lies on this result. In order to be more specific, we start by  giving  the definition of a subclass of $U(\Omega, \zeta_0)$ which has first appeared in the  bibliography in order to prove algebraic genericity of the class $U(\Omega, \zeta_0)$ (see \cite{BGNP}).
\begin{definition}\label{sub} A holomorhic function $f:\Omega\to\cc$ belongs to the class $U_{\lambda }(\Omega, \zeta_0)$, for a fixed  sequence of positive integers $\lambda\equiv(\lambda_n)_{n}$ and a fixed point $\zeta_0\in \Omega$, if the set
$\{S_{\lambda_n}(f,\zeta_0), \ n\in\nn\}$ is dense in $\mathcal{A}(K)$ for every $K\in\mathcal{M}_{\Omega^c}$.
\end{definition}  
If $\Omega$ is a simply connected domain, then  the class   $U_{\lambda }(\Omega, \zeta_0)$  is non-empty, if and only if,  the sequence $(\lambda_n)_{n}$ is unbounded  (see \cite{BGNP}).\\ \textbf{Question: }Is this class of functions independent of the choice of $\zeta_0$ or not?\\
This question was  stated in \cite{vla}, where it was proved that if $\lambda=(\lambda_n)_n=n^{\sigma}$ for some $\sigma\in\nn$, then the answer is positive.

Our goal is to study  center independence for more general sequences.
 Our main result implies  that if  $\lambda=(\lambda_n)_n=([P(n)])_n$, where $P$ is a polynomial with $\di\lim_n P(n)=+\infty$  or if $\lambda=(\lambda_n)_n=(a^n)_n$, $a>1$, then the answer to the above question is again affirmative.  We use methods and ideas presented  in \cite{vla}.

\section{Independence of center  of expansion}

Let us  start by giving the definition of Ostrowski-gaps, since they will play a central role in this section.
\begin{definition} Let $\di\sum_{k=0}^{\infty}a_{k} (z-\zeta_0)^k$ be a power series with positive radious of convergence. We say that it has Ostrowski gaps $(p_m,q_m), \ m=1,2,\ldots$, if there
exist two sequences of positive integers $(p_m)_{m\in\nn}$ and $(q_m)_{m\in\nn}$ such that the following hold:\\
(i) $p_1<q_1\leq p_2<q_2\leq\ldots$ and $\di\lim_{m}\frac{q_m}{p_m}=+\infty$\\
(ii)For $\di I=\bigcup_{m=1}^{\infty}\{p_m+1,\ldots,q_m\}$, we have $\di\lim_{\nu\in I}|a_{\nu}|^{\frac{1}{\nu}}=0.$
\end{definition}
Let $(\lambda_{n})_n$ be an unbounded sequence of positive integers such that for every strictly increasing sequence of positive integers $(n_k)_k$, these exist two sequences of positive integers $(p_k)_k,(q_k)_k$ with:\\
1) $(q_k)_k$ is a subsequence of $(n_k)_k$.\\
2) $(p_k)_k$ is striclty increasing.\\
3) $\lambda_{p_k}<\lambda_{q_k}\leq \lambda_{p_{k+1}}, \forall k\in\nn$.\\
4)$\dfrac{\lambda_{q_k}}{\lambda_{p_k}}\to +\infty$ and $\dfrac{\lambda_{q_k}}{\lambda_{p_k}}\leq  k, \ \forall k\in\nn$. 
\begin{theorem}\label{inde} If $\lambda=(\lambda_n)_n$ is as described above. Then the class $U_{\lambda }(\Omega, \zeta_0)$ is independent of the choice of center  $\zeta_0$. 
\end{theorem}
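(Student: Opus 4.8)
The plan is to show that every $f\in U_{\lambda}(\Omega,\zeta_0)$ lies also in $U_{\lambda}(\Omega,\zeta_1)$ for an arbitrary second centre $\zeta_1\in\Omega$; by symmetry this yields the asserted independence. So fix such an $f$, fix $K\in\mathcal{M}_{\Omega^c}$, $h\in\mathcal{A}(K)$ and $\varepsilon>0$. Since $\{S_{\lambda_n}(f,\zeta_0)\}$ is dense in $\mathcal{A}(K)$, there is a strictly increasing sequence $(n_k)_k$ with $\|S_{\lambda_{n_k}}(f,\zeta_0)-h\|_K\to 0$. The whole argument is organised around the comparison of the partial sums $S_N(f,\zeta_0)$ and $S_N(f,\zeta_1)$ for indices $N$ lying in an Ostrowski gap of $f$: on such indices the two partial sums agree, up to an error tending to $0$, uniformly on every compact subset of $\cc$. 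The task is thus to manufacture, out of the approximating indices $(n_k)$, a genuine Ostrowski gap structure whose upper endpoints are among the prescribed indices $\lambda_{n_k}$.

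First I would apply the hypotheses 1)--4) on $\lambda$ to the sequence $(n_k)$, producing $(p_k)_k,(q_k)_k$. Since $(q_k)_k$ is a subsequence of $(n_k)_k$ we still have $\|S_{\lambda_{q_k}}(f,\zeta_0)-h\|_K\to 0$, while conditions 3) and 4) say precisely that the integers $\lambda_{p_1}<\lambda_{q_1}\le\lambda_{p_2}<\cdots$ satisfy $\lambda_{q_k}/\lambda_{p_k}\to+\infty$, i.e. requirement (i) of the definition of Ostrowski gaps. It remains to secure requirement (ii), the smallness $\lim_{\nu\in I}|a_\nu|^{1/\nu}=0$ of the Taylor coefficients $a_\nu$ of $f$ around $\zeta_0$ on $I=\bigcup_k\{\lambda_{p_k}+1,\dots,\lambda_{q_k}\}$. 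Here I would use that $U_{\lambda}(\Omega,\zeta_0)\st U(\Omega,\zeta_0)$ (density of a subsequence forces density of the whole sequence), so that $f$ is a classical universal Taylor series and therefore, by the gap theorem for such series (see \cite{vmy1}), its expansion around $\zeta_0$ does possess Ostrowski gaps based at the approximating indices. The two-sided estimate in 4), namely $\lambda_{q_k}/\lambda_{p_k}\to+\infty$ together with $\lambda_{q_k}/\lambda_{p_k}\le k$, is exactly what lets the requested intervals $(\lambda_{p_k},\lambda_{q_k}]$ be fitted inside these genuine gaps: the ratio is forced to diverge, so that we really have gaps, yet it is not allowed to grow faster than the ratio made available by the universality of $f$.

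With the gaps in hand the comparison is carried out in two steps. Writing $\delta_k=\sup_{\lambda_{p_k}<\nu\le\lambda_{q_k}}|a_\nu|^{1/\nu}\to0$, the frozen behaviour inside a gap gives, for $i=0,1$ and any compact $L$ with $\sup_{z\in L}|z-\zeta_i|\le M$,
\[
\|S_{\lambda_{q_k}}(f,\zeta_i)-S_{\lambda_{p_k}}(f,\zeta_i)\|_L\le \frac{(\delta_k M)^{\lambda_{p_k}+1}}{1-\delta_k M}\longrightarrow 0,
\]
once one knows that the gap persists for the expansion around $\zeta_1$ as well, which I would isolate as a separate lemma (the gap structure, being equivalent to overconvergence, is independent of the centre). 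It then suffices to compare the overconvergent partial sums $S_{\lambda_{p_k}}(f,\zeta_0)$ and $S_{\lambda_{p_k}}(f,\zeta_1)$ on $K$. On a disc $D=D(\zeta_0,\rho)$ with $\rho<R=\mathrm{dist}(\zeta_0,\partial\Omega)$ both converge to $f$ with error $O((\delta_k\rho)^{\lambda_{p_k}})+O((\rho/R)^{\lambda_{q_k}})$, and a Bernstein--Walsh estimate for the polynomial $S_{\lambda_{p_k}}(f,\zeta_0)-S_{\lambda_{p_k}}(f,\zeta_1)$ of degree $\le\lambda_{p_k}$ transports this bound to $K$ at the cost of a factor $\Lambda^{\lambda_{p_k}}$ with $\Lambda>1$. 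The resulting bound $(\Lambda\rho\delta_k)^{\lambda_{p_k}}+\Lambda^{\lambda_{p_k}}(\rho/R)^{\lambda_{q_k}}$ tends to $0$ because $\delta_k\to0$ and $\lambda_{q_k}/\lambda_{p_k}\to+\infty$. Combining the two steps yields $\|S_{\lambda_{q_k}}(f,\zeta_1)-S_{\lambda_{q_k}}(f,\zeta_0)\|_K\to0$, and together with the approximation along $(q_k)$ the triangle inequality gives $\|S_{\lambda_{q_k}}(f,\zeta_1)-h\|_K<\varepsilon$ for large $k$, so $f\in U_{\lambda}(\Omega,\zeta_1)$.

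The main obstacle, and the point where the hypothesis on $\lambda$ is genuinely consumed, is the simultaneous control of these estimates on $K\st\Omega^c$, where the partial sums do not converge: the smallness of the gap coefficients ($\delta_k\to0$) must be played off against the Bernstein--Walsh factor $\Lambda^{\lambda_{p_k}}$ and the mismatch of the two centres. The upper bound $\lambda_{q_k}/\lambda_{p_k}\le k$ is precisely the growth restriction that keeps the constructed intervals inside the genuine Ostrowski gaps of $f$ and the error terms decaying, which is why one expects the conclusion only for sequences $\lambda$ that do not increase too rapidly. A secondary technical point, to be recorded as a lemma following the ideas of \cite{vla}, is the centre-independence of the gap structure itself, needed in order to freeze the partial sums around $\zeta_1$.
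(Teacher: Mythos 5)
Your overall architecture --- produce Ostrowski gaps $(\lambda_{p_k},\lambda_{q_k})$ for the expansion around $\zeta_0$, use the frozen behaviour inside the gaps to pass from $S_{\lambda_{q_k}}(f,\zeta_0)$ to $S_{\lambda_{p_k}}(f,\zeta_0)$, and then transfer $S_{\lambda_{p_k}}$ to the new centre --- is the paper's, and your second half (comparing $S_{\lambda_{p_k}}(f,\zeta)$ with $S_{\lambda_{p_k}}(f,\zeta_0)$ via a Bernstein--Walsh estimate) is in substance the content of Lemma 9.2 of \cite{N1}, which the paper simply cites. (The paper also concludes at index $\lambda_{p_k}$ rather than $\lambda_{q_k}$, which spares it your auxiliary lemma on persistence of the gap around $\zeta_1$.) The proof breaks down, however, at the one step that carries all the weight: establishing property (ii) of the Ostrowski gaps, namely that $|a_\nu|^{1/\nu}\to 0$ for $\nu$ in the \emph{prescribed} intervals $\{\lambda_{p_k}+1,\dots,\lambda_{q_k}\}$. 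You propose to get this from $U_\lambda(\Omega,\zeta_0)\st U(\Omega,\zeta_0)$ and the existence of Ostrowski gaps for classical universal Taylor series, claiming that the bound $\lambda_{q_k}/\lambda_{p_k}\le k$ lets you ``fit'' your intervals inside those pre-existing gaps. That existence theorem is purely qualitative: it produces \emph{some} intervals, located wherever $f$ happens to have small coefficients, and gives no lower bound on how far below a given approximating index a gap extends. Nothing forces the set where $|a_\nu|^{1/\nu}$ is small to contain $(\lambda_{p_k},\lambda_{q_k}]$, so the fitting claim is unsupported and the argument fails here.

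The missing idea is quantitative and has two parts. First, the sequence $(n_k)$ must do double duty: besides $\|S_{\lambda_{n_k}}(f,\zeta_0)-h\|_K\to 0$ one also demands $\|S_{\lambda_{n_k}}(f,\zeta_0)\|_{E_k}\to 0$ for an increasing exhaustion $(E_k)$ of a closed set, disjoint from $K$, which is non-thin at infinity (each $K\cup E_k$ still lies in $\mathcal{M}_{\Omega^c}$, so universality supplies such an $(n_k)$). Your $(n_k)$ only approximates $h$ on $K$, so you have no control whatsoever on the growth of the polynomials $S_{\lambda_{n_k}}(f,\zeta_0)$ away from $K$. Second, from the boundedness on the non-thin set one deduces, via Lemma 2 of \cite{M-Y}, that $\limsup_k\|S_{\lambda_{n_k}}(f,\zeta_0)\|_{|z|=R}^{1/\lambda_{n_k}}\le 1$ for every $R$, hence after passing to a subsequence $\|S_{\lambda_{q_k}}(f,\zeta_0)\|_{|z|=3^k}^{1/\lambda_{q_k}}\le 2$; Cauchy estimates on the circle $|z|=3^k$ then give, for $\lambda_{p_k}\le\nu\le\lambda_{q_k}$,
$$|a_\nu|^{1/\nu}\le \frac{2^{\lambda_{q_k}/\lambda_{p_k}}}{3^k}\le\frac{2^{k}}{3^k}\to 0 .$$
This is the only place the hypothesis $\lambda_{q_k}/\lambda_{p_k}\le k$ is consumed, and it is played off against the radius $3^k$, not used to fit intervals into pre-existing gaps. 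Without this chain (simultaneous approximation of $0$ on a non-thin set, the M\"uller--Yavrian growth lemma, Cauchy estimates) your proof has no source for the coefficient decay it needs.
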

\begin{proof}Let $f\in U_{\lambda }(\Omega, \zeta_0)$, 
 $K\in\mathcal{M}_{\Omega^c}$ and $g\in \mathcal{A}(K)$.
In view of Lemma 2.2 in \cite{vla}( see also Lemma 1 in \cite{vmy1}), there exists  an increasing  sequence of compact sets $(E_k)_{k\in\nn}$ that satisfies the following:\\
     (i)  $E_k\in\mathcal{M}_{\Omega^c}, \ k=1,2,\ldots$\\
   (ii) $\bigcup_{k}E_k$ is closed and non-thin at $\infty$,\\
   (iii) Every  $E_k$ is disjoint from $K$.\\
   
Since $f\in U_{\lambda }(\Omega, \zeta_0)$,  there exists a strictly increasing sequence $(n_k)_{k}$ such that:
$$ ||S_{\lambda_{n_k}}(f, \zeta_0)-g||_{K}\xrightarrow{k\to\infty}0,   $$          
 $$  ||S_{\lambda_{n_k}}(f,\zeta_0)||_{E_k}\xrightarrow{k\to\infty}0.    $$     \
The sequence  $( S_{\lambda_{n_k}}(f,\zeta_0))_k$ is pointwise convergent on $ E=\bigcup_k E_k$. Thus, 
for every $z\in  E=\bigcup_k E_k$,
$$ \limsup_{k}|S_{\lambda_{n_k}}(f,\zeta_0)(z)|^{\frac{1}{\lambda_{n_k}}}\leq 1.$$
 Note that for every $k$, the function $S_{\lambda_{n_k}}(f,\zeta_0)$ is a  polynomial of degree less or equal to $\lambda_{n_k}$.   Therefore in view of Lemma 2 in \cite{M-Y}  we conclude that:
$$ \limsup_{k}||S_{\lambda_{n_k}}(f,\zeta_0)||^{\frac{1}{\lambda_{n_k}}}_{|z|=R}\leq 1, \ \ \forall R>0.$$
Passing to a subsequence, we may assume that:
$$||S_{\lambda_{n_k}}(f,\zeta_0)||^{\frac{1}{\lambda_{n_k}}}_{|z|=3^k}\leq 2,  \   k=1,2,\ldots.$$
Now according to our assumptions, we may find two striclty increasing sequences of positive interges $(p_k)_k, (q_k)_k$ that satisfy 1-4.\\
Note that:
$$||S_{\lambda_{q_k}}(f,\zeta_0)||^{\frac{1}{\lambda_{q_k}}}_{|z|=3^k}\leq 2,  \   k=1,2,\ldots.$$
Assume that
$\di \sum_{\nu=0}^{\infty}a_{\nu} (z-\zeta_{0})^{\nu}$ is the Taylor expansion of $f$ around $\zeta_{0}$.
Then using Cauchy estimates  for $\lambda_{p_k}\leq\nu\leq  \lambda_{q_k}$ we obtain:
$$|a_{\nu}|^{\frac{1}{\nu}}\leq \frac{||S_{\lambda_{q_k}}(f, \zeta_0)||^{\frac{1}{\nu}}_{|z|=3^k}}{3^k}=\frac{\biggl(||S_{\lambda_{q_k}}(f, \zeta_0)||^{\frac{1}{\lambda_{q_k}}}_{|z|=3^k}\biggl)^{\frac{\lambda_{q_k}}{\nu}}}{3^k} \leq
\frac{2^{\frac{\lambda_{q_k}}{\lambda_{p_k}}}}{3^k}\leq \frac{2^{ k}}{3^k}\to 0.$$
Thus,    the  corresponding power series $\di \sum_{k=0}^{\infty}a_{\nu} (z-\zeta_{\sigma})^{\nu}$  has Ostrowski-gaps $(\lambda_{p_k}, \lambda_{q_k})$, $ k=1,2,\ldots$.\\
It is known (see \cite{Luh}) that in this case:
$$S_{\lambda_{q_k}}(f, \zeta_0)- S_{\lambda_{p_k}}(f,\zeta_0)\xrightarrow{k\to \infty} 0, \text{ compactly on } \cc.$$
Moreover, in view of Lemma 9.2 in  \cite{N1} (see also Theorem 1 in  \cite{Luh}) we have:
$$\sup_{\zeta\in L}\sup_{z\in K}|S_{\lambda_{p_k}}(f, \zeta)(z)-S_{\lambda_{p_k}}(f,\zeta_0)(z)|\xrightarrow{k\to \infty} 0$$
for every choice of compact set $L\st\Omega$.\\
Thus:
$$\sup_{\zeta\in L} ||S_{\lambda_{p_k}}(f, \zeta)-g||_{K}\xrightarrow{k\to\infty}0,$$
  for every $ L\st\Omega$  compact.\\
Therefore the center independence follows. 
\end{proof}
Actually, the proof above says more than what is stated in Theorem \ref{inde}. Let us give one more definition to describe what we mean:
\begin{definition} A holomorhic function $f:\Omega\to\cc$ belongs to the class $U_{\lambda }(\Omega)$ for a fixed  sequence of positive integers $\lambda\equiv(\lambda_n)_{n}$,  if  for every compact set $K\in \mathcal{M}_{\Omega^c}$ and every function $g\in \mathcal{A}(K) $, there exists a subsequence $(\lambda_{\mu_n})_n$ of  $(\lambda_n)_{n}$ such that:
$$\sup_{\zeta\in \Gamma}||S_{\lambda_{\mu_n}}(f,\zeta)-g||_{K}\xrightarrow{n\to+\infty}0,$$
for every compact set $\Gamma\st\Omega$.
\end{definition}
\begin{corollary}\label{corinde}If $\Omega$ is simply connected and $\lambda$ is as in Theorem \ref{inde}, then the classes $U_{\lambda}(\Omega,\zeta_0)$ and $U_{\lambda}(\Omega)$ coincide. 
\end{corollary}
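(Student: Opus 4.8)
The plan is to prove the equality by establishing the two inclusions $U_{\lambda}(\Omega)\st U_{\lambda}(\Omega,\zeta_0)$ and $U_{\lambda}(\Omega,\zeta_0)\st U_{\lambda}(\Omega)$ separately, the first being essentially trivial and the second being a direct reading of the proof of Theorem \ref{inde}.

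For the inclusion $U_{\lambda}(\Omega)\st U_{\lambda}(\Omega,\zeta_0)$ I would fix $f\in U_{\lambda}(\Omega)$ together with arbitrary $K\in\mathcal{M}_{\Omega^c}$ and $g\in\mathcal{A}(K)$, and then apply the defining property of $U_{\lambda}(\Omega)$ with the compact set $\Gamma=\{\zeta_0\}$. Since a singleton is a compact subset of $\Omega$, this immediately produces a subsequence $(\lambda_{\mu_n})_n$ with
$$\|S_{\lambda_{\mu_n}}(f,\zeta_0)-g\|_{K}\xra{n\to\infty}0,$$
so that $g$ lies in the closure of $\{S_{\lambda_n}(f,\zeta_0):n\in\nn\}$ in $\mathcal{A}(K)$. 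As $K$ and $g$ were arbitrary, this gives $f\in U_{\lambda}(\Omega,\zeta_0)$.

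For the reverse inclusion $U_{\lambda}(\Omega,\zeta_0)\st U_{\lambda}(\Omega)$ I would appeal to the argument already carried out for Theorem \ref{inde}. Rereading that proof, one sees that starting from $f\in U_{\lambda}(\Omega,\zeta_0)$ and arbitrary $K\in\mathcal{M}_{\Omega^c}$, $g\in\mathcal{A}(K)$, it does not merely yield convergence for one alternative center, but in fact constructs a subsequence $(\lambda_{p_k})_k$ for which
$$\sup_{\zeta\in L}\|S_{\lambda_{p_k}}(f,\zeta)-g\|_{K}\xra{k\to\infty}0,\qquad L\st\Omega \text{ compact}.$$
Identifying $(\lambda_{\mu_n})_n$ with $(\lambda_{p_k})_k$, this is word for word the requirement for $f\in U_{\lambda}(\Omega)$, and the inclusion follows.

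Combining the two inclusions yields the claimed equality of classes. I expect no genuine obstacle at the level of the corollary: all the analytic substance — the extraction of Ostrowski gaps from the universality at $\zeta_0$ and the uniform transfer of the partial sums from the center $\zeta_0$ to all nearby centers via Lemma 9.2 in \cite{N1} — has already been discharged inside the proof of Theorem \ref{inde}. The only points demanding care are the bookkeeping observation that the conclusion isolated in that proof is uniform in $\zeta$ over compacta, so that it matches the definition of $U_{\lambda}(\Omega)$ verbatim, and the trivial remark that $\{\zeta_0\}$ qualifies as a compact $\Gamma\st\Omega$ in the easy direction.
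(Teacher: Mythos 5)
Your proposal is correct and matches the paper's intent: the paper's proof is the one-line remark that the corollary follows directly from the proof of Theorem \ref{inde}, whose final displayed estimate is exactly the uniform-in-$\zeta$ convergence you quote, and the easy inclusion via $\Gamma=\{\zeta_0\}$ is the same trivial observation left implicit there. You have simply written out the two inclusions explicitly; nothing is different in substance.
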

\begin{proof} This follows directly from the proof of Theorem \ref{inde}.
\end{proof}
Let us, now, present two applications of Corollary\ref{corinde}.
\begin{theorem}
Let $P$ be a polynomial such that $P(n)\xrightarrow{n\to+\infty} +\infty$. If \\ $\lambda=(\lambda_n)_n=([P(n)])_n$, then $U_{\lambda}(\Omega, \zeta_0)=U_{\lambda}(\Omega),$ for all $\zeta_0\in\Omega$.
\end{theorem}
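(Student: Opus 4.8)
The plan is to deduce this from Corollary \ref{corinde}: since that corollary applies to every $\lambda$ satisfying the hypotheses of Theorem \ref{inde}, it suffices to check that $\lambda=([P(n)])_n$ is an unbounded sequence of positive integers from which, given an arbitrary strictly increasing $(n_k)_k$, one can extract sequences $(p_k)_k,(q_k)_k$ obeying conditions 1--4. Unboundedness is immediate from $P(n)\to+\infty$. The first step is to record the asymptotics: writing $d=\deg P\ (\ge 1)$ and letting $a_d>0$ be the leading coefficient, one has $[P(n)]\sim a_d n^d$, so there exist $N_0\in\nn$ and constants $0<c_1<c_2$ with $c_1 n^d\le \lambda_n\le c_2 n^d$, with $\lambda_n$ a positive integer and $(\lambda_n)$ non-decreasing, for all $n\ge N_0$. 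All indices used below will be taken $\ge N_0$, so the finitely many initial terms play no role.

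Given an arbitrary strictly increasing $(n_k)_k$, I would first fix target ratios. Because $(c_1k/c_2)^{1/d}\to+\infty$, I can choose integers $t_k\to+\infty$ with $t_k^d\le (c_1/c_2)\,k$ for all $k$ (for instance $t_k=\lfloor\tfrac12(c_1 k/c_2)^{1/d}\rfloor$ once this exceeds $1$). I would then select $(q_k)_k$ recursively as a subsequence of $(n_k)_k$, taking each $q_k$ far enough out that $q_k\ge N_0$, that $q_k/t_k\to+\infty$, and that $q_{k+1}\ge t_{k+1}\,q_k$; this is possible since $(n_k)_k$ is unbounded. Finally I would set $p_k=\lceil q_k/t_k\rceil$.

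The verification of conditions 1--4 then reduces to one estimate. From $p_k=\lceil q_k/t_k\rceil$ we get $q_k/p_k\le t_k$, whence, using the two-sided bound on $\lambda$,
$$\frac{\lambda_{q_k}}{\lambda_{p_k}}\le \frac{c_2}{c_1}\Bigl(\frac{q_k}{p_k}\Bigr)^{d}\le \frac{c_2}{c_1}\,t_k^{d}\le k,$$
which is the upper bound in condition 4; and since $p_k\le q_k/t_k+1$ with $q_k/t_k\to+\infty$ we have $q_k/p_k\to+\infty$, so
$$\frac{\lambda_{q_k}}{\lambda_{p_k}}\ge \frac{c_1}{c_2}\Bigl(\frac{q_k}{p_k}\Bigr)^{d}\xrightarrow{k\to\infty}+\infty,$$
giving the divergence in condition 4 and also $\lambda_{p_k}<\lambda_{q_k}$ for large $k$. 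The interlacing $\lambda_{q_k}\le\lambda_{p_{k+1}}$ of condition 3 follows because $p_{k+1}\ge q_{k+1}/t_{k+1}\ge q_k$ together with the monotonicity of $\lambda$, and the same inequalities show $(p_k)_k$ is strictly increasing; $(q_k)_k$ is a subsequence of $(n_k)_k$ by construction. A routine relabeling absorbs the finitely many small $k$ at which the strictness or ordering requirements are not yet met.

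The crux, and the only place where the polynomial form of $\lambda$ enters, is the simultaneous fulfilment of the two halves of condition 4: the ratio $\lambda_{q_k}/\lambda_{p_k}$ must diverge (this is what ultimately manufactures the Ostrowski gaps driving Theorem \ref{inde}) yet stay bounded by $k$. For $\lambda_n\sim a_d n^d$ the ratio is comparable to $(q_k/p_k)^{d}$, so one can let the index gap $q_k/p_k$ tend to infinity while the ratio itself grows only like a fixed power of $t_k$, slowly enough to remain below $k$. I expect this balancing, rather than the interlacing bookkeeping, to be the main point; condition 3 is comparatively easy to arrange, precisely because each $q_k$ may be taken arbitrarily far out along $(n_k)_k$.
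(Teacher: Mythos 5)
Your argument is correct and follows essentially the same route as the paper: extract a sparse subsequence $(q_k)_k$ of $(n_k)_k$, then define $p_k$ so that the ratio $\lambda_{q_k}/\lambda_{p_k}$ is forced to equal roughly a prescribed quantity that tends to infinity while staying below $k$, with the interlacing $\lambda_{q_k}\le\lambda_{p_{k+1}}$ coming from the sparseness of $(q_k)_k$. The only difference is mechanical --- the paper computes $p_k$ by explicitly inverting $P$ (setting $p_k=[P^{-1}(P(q_k)/\sqrt{k})]+1$, targeting a ratio of about $\sqrt{k}$), whereas you use the two-sided bound $c_1 n^d\le\lambda_n\le c_2 n^d$ and set $p_k=\lceil q_k/t_k\rceil$ --- and both proofs verify conditions 1--4 only for large $k$ and absorb the finitely many initial indices by relabeling.
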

\begin{proof} We will prove that the sequence $\lambda$ satisfies conditions (1)-(4) and the result will follow in view of Corollary \ref{corinde}.
Fix a strictly increasing sequence of positive integers $(n_k)_k$. We choose a subsequence $(q_k)_k$ of  
$(n_k)_k$ such that $P(q_k)>\sqrt{ k} P(q_{k-1}), \ \forall k\in\nn$.\\
For $k$ large enough we set:
$$p_k=[P^{-1}\biggl(\dfrac{P(q_k)}{\sqrt{ k}}\biggl)]+1,$$
where $P^{-1}$ is the inverse of $P$ on an interval of the form $[a,+\infty)$ (note that $P$ is  striclty increasing function on an interval of this form).\\
Then:
$$p_k-1\leq P^{-1}\biggl(\dfrac{P(q_k)}{\sqrt{ k}}\biggl) <p_k\Rightarrow P(p_k-1)\leq \dfrac{P(q_k)}{\sqrt{ k}}<P(p_k) \Rightarrow$$
$$\Rightarrow \dfrac{P(p_k-1)}{P(p_k)}\sqrt{ k}\leq \dfrac{P(q_k)}{P(p_k)}<\sqrt{ k}\Rightarrow$$
$$\Rightarrow \dfrac{P(p_k-1)}{P(p_k)}\sqrt{ k}-\dfrac{1}{P(p_k)}\leq \dfrac{[P(q_k)]}{[P(p_k)]}<\sqrt{ k}\dfrac{P(p_k)}{P(p_k)-1}.$$
Thus all conditions are satisfied and the result follows.
\end{proof}
\begin{theorem}
Let $\lambda=(\lambda_n)_n$ be a sequence of positive integers such that 
$$\theta<\dfrac{\lambda_{n+1}}{\lambda_n}<M, \ \forall n\in\nn,$$
with $1<\theta<M$.
Then $U_{\lambda}(\Omega, \zeta_0)=U_{\lambda}(\Omega),$ for all $\zeta_0\in\Omega$.
\end{theorem}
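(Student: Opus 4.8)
The plan is to verify that the sequence $\lambda=(\lambda_n)_n$ satisfies conditions (1)--(4) stated just before Theorem \ref{inde}, whence the identity $U_{\lambda}(\Omega,\zeta_0)=U_{\lambda}(\Omega)$ follows at once from Corollary \ref{corinde}. Observe first that, since $\theta>1$, the hypothesis $\lambda_{n+1}/\lambda_n>\theta$ forces $(\lambda_n)_n$ to be strictly increasing and unbounded; in particular, comparing indices of $\lambda$ is the same as comparing the values $\lambda_n$, so condition (3) reduces to the interlacing $p_k<q_k\le p_{k+1}$. Telescoping the two-sided ratio bound produces the key estimate
$$\theta^{\,n-m}<\frac{\lambda_n}{\lambda_m}<M^{\,n-m},\qquad n>m,$$
which converts control of the index gap $n-m$ into control of the ratio $\lambda_n/\lambda_m$.

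Now fix an arbitrary strictly increasing sequence of positive integers $(n_k)_k$. The idea is to prescribe the index gap $d_k:=q_k-p_k$ and then read off $p_k$ from $q_k$. I would take $d_k=\lfloor \log_M k\rfloor$ for $k$ large enough (relabelling so that the construction begins where $d_k\ge 1$, exactly as in the polynomial case above). With this choice $d_k\to+\infty$, so by the lower estimate $\lambda_{q_k}/\lambda_{p_k}>\theta^{\,d_k}\to+\infty$, while $M^{d_k}\le M^{\log_M k}=k$ gives, by the upper estimate, $\lambda_{q_k}/\lambda_{p_k}<M^{\,d_k}\le k$. This is precisely condition (4).

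It remains to place the $q_k$ and $p_k$ consistently with (1)--(3). I would select $(q_k)_k$ as a sufficiently sparse subsequence of $(n_k)_k$, ensuring at each stage that $q_{k+1}\ge q_k+d_{k+1}$, and set $p_k:=q_k-d_k$. This is possible because $(n_k)_k$ is unbounded while the gaps $d_k$ grow only logarithmically, so there is always room to skip far enough ahead. Then $(q_k)_k$ is a subsequence of $(n_k)_k$ (condition (1)); one checks $p_{k+1}=q_{k+1}-d_{k+1}\ge q_k=p_k+d_k>p_k$, so $(p_k)_k$ is strictly increasing (condition (2)); and $q_k\le p_{k+1}$ together with $p_k<q_k$ yields the interlacing (condition (3)). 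Invoking Corollary \ref{corinde} then finishes the proof.

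The main obstacle I anticipate is the simultaneous calibration demanded by condition (4): the ratio $\lambda_{q_k}/\lambda_{p_k}$ must diverge yet stay bounded by $k$. This is exactly where the two-sided geometric control is indispensable—the upper bound $M^{\,d_k}\le k$ caps the growth while the lower bound $\theta^{\,d_k}\to+\infty$ forces divergence—and choosing $d_k$ of logarithmic size in $k$ is the natural scale reconciling the two. The ordering constraints (1)--(3) are then only a matter of spacing the subsequence, which the slow growth of $d_k$ renders harmless.
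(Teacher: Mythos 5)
Your proposal is correct and follows essentially the same route as the paper: the paper also sets the gap $q_k-p_k=\bigl[\log k/\log M\bigr]$, chooses $(q_k)_k$ sparse enough within $(n_k)_k$ to make room for these gaps, and uses the telescoped bound $\theta^{q_k-p_k}<\lambda_{q_k}/\lambda_{p_k}<M^{q_k-p_k}$ to get condition (4). Your write-up in fact makes explicit the final estimates $M^{d_k}\le k$ and $\theta^{d_k}\to+\infty$ that the paper leaves to the reader.
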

\begin{proof} Let $(n_k)_k$ be a striclty increasing sequence of positive integers. We choose $(q_k)_k$ a subsequence of $(n_k)_k$ with 
$$q_{k+1}>q_k+\dfrac{\log (k+1)}{\log M},$$
(without loss of generality we may assume that $\log M>0$).\\
We set:
$$p_k=q_k-\biggl[\dfrac{ \log k}{\log M}\biggl]\Rightarrow q_k-p_k=\biggl[\dfrac{ \log k}{\log M}\biggl].$$
Then:
$$q_k>p_k$$
$$p_{k+1}=q_{k+1}-\biggl[\dfrac{ \log (k+1)}{\log M}\biggl]>q_k.$$
Since $\lambda$ is striclty increasing $\lambda_{p_k}<\lambda_{q_k}<\lambda_{p_{k+1}}$.
Moreover:
$$\theta^{q_k-p_k}<\dfrac{\lambda_{q_k}}{\lambda_{p_k}}<M^{q_k-p_k}.$$
and the result follows.
\end{proof}
\noindent\textbf{Open Question:} Does the result hold if $\lambda_n=n!$?

V.Vlachou 
\\
Department of Mathematics,\\
University of Patras,\\
26500 Patras,GREECE\\
e-mail: vvlachou@math.upatras.gr

\end{document}